\documentclass{elsarticle}
\usepackage{amsfonts,amscd,amssymb,amsmath,amsthm}
\usepackage{graphicx,mathrsfs,appendix}
\usepackage{centernot,caption,subcaption,color,url}
\usepackage[T1]{fontenc}
\usepackage{ctable}

\newtheorem{theorem}{Theorem}[section]
\newtheorem{lemma}[theorem]{Lemma}

\theoremstyle{definition}

\newcommand{\A}{\mathcal{A}}

\newcommand{\N}{\mathbb N}

\begin{document}
\title{First-order logic of uniform attachment random graphs with a given degree\tnoteref{t1}}
\tnotetext[t1]{Present work was funded by RFBR, project number 19-31-60021.}
\author[1]{Y.A. Malyshkin}
\ead{yury.malyshkin@mail.ru}

%\author{Yury Malyshkin, Maksim Zhukovskii}
%\address{Moscow Institute of Physics and Technology}
%\keywords{uniform attachment, random graphs, 0-1 law, first-order logic}
%\date{}

\address[1]{Moscow Institute of Physics and Technology//Tver State University}

\begin{abstract}
In this paper, we prove the first-order convergence law for the uniform attachment random graph with almost all vertices having the same degree. In the considered model, vertices and edges are introduced recursively: at time $m+1$ we start with a complete graph on $m+1$ vertices. At step $n+1$ the vertex $n+1$ is introduced together with $m$ edges joining the new vertex with $m$ vertices chosen uniformly from those vertices of $1,\ldots,n$, whom degree is less then $d=2m$. To prove the law, we describe the dynamics of the logical equivalence class of the random graph using Markov chains. The convergence law follows from the existence of a limit distribution of the considered Markov chain.
\end{abstract}

\begin{keyword}
uniform attachment; convergence law; first-order logic; Markov chains
\end{keyword}

\maketitle

\section{Introduction}

%In the present paper, we study the first-order (FO) logic for a dynamic model of graphs with almost all degrees equal to $d=2m$ for some $m>1$. A graph with all degrees equal to $d$ is called a $d$-regular graph. Usually, regular graphs are built using static models, when the process of building the graph on $n+1$ vertices does not depend on a such process for the graph on $n$ vertices. One of the main ways to obtain random $d$-regular graphs is to consider a uniform sample from the set of all $d$-regular graphs. A more dynamic approach was suggested in \cite{RW92, RW02}, where the random regular graph (or, in some cases, almost regular graph) was built using the following step-by-step procedure (although the total number of vertices was fixed at the start of the process). The process starts with $n$ vertices of degree $0$, and then on each step, two vertices with a degree less than $d$ that do not connect by the edge are chosen uniformly from all vertices and connected by the edge. The process stops when no edge can be drawn. In such a model, no new vertices are introduced during the building steps, only new edges are added, so to obtain a graph on $n+1$ vertices all steps should be repeated from the start. We consider a model somewhat similar to such a model in the sense that we introduce a new vertex and draw edges from it to vertices of degrees less than $d$ and we study logical laws for such a model.

In the present paper, we proof of the FO convergence law for uniform attachment random graphs with most vertices having a given degree using finite Markov chains.

FO sentences about graphs could include the following symbols: variables $x,y,x_1,\ldots$ (which represent vertices), logical connectives $\wedge,\vee,\neg,\Rightarrow,\Leftrightarrow$, two relational symbols (between variables) $\sim$ (adjacency) and $=$ (equality) , brackets and quantifiers $\exists,\forall$  (see the formal definition in, e.g.,~\cite{Libkin}). The sequence $G_n$ of random graphs obeys the FO convergence law, if, for every FO sentence $\varphi$, $\Pr(\mathcal{G}_n\models\varphi)$ converges as $n\to\infty$. If the limit is eighter $0$ or $1$ for any formula, $G_n$ obeys the zero-one law. If $G_n$ obeys the zero-one law, it is trivial in terms of the FO logic in the sense that all properties are trivial on a typical large enough graph. 

The FO logical laws usually proven using Ehrenfeucht-Fra\"{\i}ss\'{e} pebble game (see, e.g., \cite[Chapter 11.2]{Libkin}). The connection between FO logic and the Ehrenfeucht-Fra\"{\i}ss\'{e} game is described in the following result.
\begin{theorem}
\label{thm:Ehren}
Duplicator wins the $\gamma$-pebble game on $G$ and $H$ in $R$ rounds if and only if, for every FO sentence $\varphi$ with at most $\gamma$ variables and quantifier depth at most $R$, either $\varphi$ is true on both $G$ and $H$ or it is false on both graphs.
\end{theorem}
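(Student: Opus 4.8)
The plan is to prove both directions simultaneously by induction on the number of rounds $R$, after generalizing the statement to arbitrary positions of the game. Fix variable names $x_1,\dots,x_\gamma$. A position is a pair of tuples $\bar a=(a_1,\dots,a_k)$ from $G$ and $\bar b=(b_1,\dots,b_k)$ from $H$ with $k\le\gamma$, recording where the pebbles currently sit and which variable each carries. Call $(G,\bar a)$ and $(H,\bar b)$ $(\gamma,R)$-\emph{equivalent} if every FO formula that uses at most $\gamma$ distinct variables, has free variables among $x_1,\dots,x_\gamma$, and has quantifier depth at most $R$ receives the same truth value under $x_i\mapsto a_i$ and under $x_i\mapsto b_i$. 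I would prove: Duplicator wins the remaining $R$ rounds from the position $(\bar a,\bar b)$ if and only if $(G,\bar a)$ and $(H,\bar b)$ are $(\gamma,R)$-equivalent. The theorem is the special case of empty tuples.

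For the base case $R=0$ there is nothing to quantify, so the formulas in question are Boolean combinations of atoms $x_i\sim x_j$ and $x_i=x_j$; these agree on the two sides exactly when $a_i\mapsto b_i$ is a partial isomorphism, which is precisely Duplicator's surviving condition. For the inductive step I would observe that Duplicator wins $R$ rounds from $(\bar a,\bar b)$ iff for every pebble index $i$ and every vertex $c$ that Spoiler may place that pebble on in $G$, Duplicator has a reply $d$ in $H$ so that the updated position is winning for $R-1$ rounds, and symmetrically with the roles of $G$ and $H$ exchanged. By the induction hypothesis the inner condition is $(\gamma,R-1)$-equivalence of the updated positions, and moving pebble $i$ corresponds to requantifying the variable $x_i$ — this is exactly where the bound $\gamma$ on pebbles matches the bound $\gamma$ on distinct variables. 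Translating ``for every $c$ there is a matching $d$'' and its mirror image into $\exists x_i$ and $\forall x_i$ prefixes, and closing under Boolean combinations, yields preservation of all quantifier-depth-$R$ formulas, because every such formula is a Boolean combination of formulas of the form $\exists x_i\,\psi$ or $\forall x_i\,\psi$ with $\psi$ of quantifier depth $\le R-1$.

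The hard part will be making the translation step rigorous: Duplicator's winning condition quantifies over \emph{all} possible witnesses $d$, whereas the logic only has access to finitely many formulas. The standard remedy, which I would import, is the finiteness lemma: up to logical equivalence there are only finitely many FO formulas with $\le\gamma$ variables and quantifier depth $\le R$ (an easy induction on $R$, starting from the finite set of atoms over $\gamma$ variables and noting that each quantifier layer enlarges the stock only finitely). This lets one attach to the tuple $\bar a$ its \emph{characteristic} (Hintikka) formula $\tau^R_{\bar a}$, the conjunction of all depth-$\le R$ formulas it satisfies, so that $(\gamma,R)$-equivalence of $(G,\bar a)$ and $(H,\bar b)$ becomes the single statement $H\models\tau^R_{\bar a}[\bar b]$. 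With characteristic formulas available, ``for every $c$ there is a matching $d$'' is captured by one sentence $\forall x_i\bigvee_{\tau}\exists x_i\,\tau$, the disjunction running over the finitely many depth-$(R-1)$ characteristic formulas, and the induction closes. The remaining bookkeeping — that overwriting a pebble really does correspond to requantifying its variable and leaves the other pebbles' types intact — is routine but should be written out.
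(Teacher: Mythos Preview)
The paper does not prove this theorem: it is quoted as a classical background result, with a reference to Libkin's textbook \cite{Libkin}, and no argument is given. Your proposal is therefore not competing with any proof in the paper; it is supplying one where the paper simply cites the literature.

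As for the content of your sketch, it is the standard textbook argument (essentially the one in Libkin, Chapter~11): generalize to positions, induct on $R$, and use the finiteness of inequivalent depth-$R$ formulas to build Hintikka/characteristic formulas so that the back-and-forth condition becomes a single FO formula. The structure is sound. One small wrinkle to tidy up when you write it out: the displayed formula $\forall x_i\bigvee_{\tau}\exists x_i\,\tau$ is not quite the right shape --- you need, for each pebble index $i$, both the ``forth'' conjunct $\bigwedge_{c}\exists x_i\,\tau^{R-1}_{\bar a[i\mapsto c]}$ and the ``back'' conjunct $\forall x_i\bigvee_{c}\tau^{R-1}_{\bar a[i\mapsto c]}$, the conjunction and disjunction ranging over the finitely many depth-$(R-1)$ types realized in $G$. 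With that correction the induction closes as you describe.
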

In particular, if for any $\epsilon>0$ one could define a finite number of classes $\A_k$, $k=1,...,K$, of graphs such that Duplicator wins the pebble game on graphs of the same class and $\Pr(G_n \in \A_k)\to p_k$ for all $k=1,...,K$ with $\sum_{k=1}^{K}p_k>1-\epsilon$, then $G_n$ obeys the FO convergence law.

Logic limit laws were studied on many different models, such as the binomial random graph (\cite{Spencer_Ehren,Shelah}), random regular graphs (\cite{Haber}), attachment models (\cite{MZ21,M22,MZ22}), etc. (\cite{Muller,Strange,Winkler}). In the present article, we are interested in recursive random graph models. One of the main arguments towards $G_n$ not satisfying the zero-one law for such models is the existence of rare subgraphs which appear with diminishing probability (and with high probability does not appear after some moment, see, e.g. \cite{MZ21}). In this case, even if $G_n$ obeys the FO convergence law and not the zero-one law, it still could be asymptotically trivial in terms of some classes of sentences of the FO logic in a sense that the correctness of the property on a graph does not change after some moment (see, e.g., \cite{M22,MZ22}). In such a case, the division on the classes $\A_k$, $k=1,...,K$, is based on subgraph of $G_n$ on first $N$ vertices for all $n>N$ ($N$ depends on $\epsilon$). The more difficult case is when the correctness of the property changes infinitely many times during the course of the (graph) process. Such behavior is somewhat similar to the behavior of a Markov chain, which changes its state infinitely many times but still could have a limiting probability distribution. One of the main purposes of the present paper is to showcase such a connection between a graph obeying the FO convergence law and the existence of the limiting probability distribution for related Markov chains (in our case such a chain would be finite). 

Let us give a formal description of our model. Fix $m\in\N$, $m>1$, and let $d=2m$. We start with a complete graph $G_{m+1}$ on $m+1$ vertices. Then on each step, we add a new vertex and draw $m$ edges from it to different vertices, chosen uniformly among vertices with a degree less than $d$. Note that the total degree (the sum of degrees of all vertices) of the graph $G_n$ would be equal to $m(m-1)+2m(n-m)=dn-m(m+1)$. In particular, the number of vertices of degree $d$ in $G_n$ is between $n-m(m+1)$ and $n-(m+1)$. Hence it is always possible to draw $m$ edges from a new vertex to different vertices (of a degree less than $d$). The model for $d>2m$ was considered in \cite{MalZhu22}, where a similar result was proven using Markov chains with infinitely many states, as well as the stochastic approximation technique.

The resulting graph is somewhat similar to a $d$-regular graph (i.e. graph with all vertices having degree $d$, either number of vertices or the degree should be even for such a graph to exist). Note that regular graphs could not be dynamic since if we draw edges to vertices of a regular graph it would no longer be regular. Hence, regular graphs are built separately for each $n$ (where $n$ is the number of vertices). Our model could be considered as a way to build a graph with properties close to a regular graph through the dynamic procedure.

Let us formulate our main result.

\begin{theorem}
\label{thm:main}
$G_n$ obeys FO convergence law.
\end{theorem}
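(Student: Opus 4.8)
The plan is to reduce the first-order behaviour of $G_n$ to that of a finite-state Markov chain carried by the (always bounded) ``active part'' of the graph, and to obtain the convergence law from the convergence of that chain's one-step distributions. Every vertex of $G_n$ has degree at most $d$, so Hanf's theorem on the locality of first-order logic (see \cite{Libkin}) applies: for each $R$ there are a radius $r=r(R)$ and a threshold $k=k(R)$ such that, for a graph of maximum degree $\le d$, the truth value of any sentence of quantifier rank $\le R$ is determined by its \emph{$(r,k)$-Hanf type}, the function sending each isomorphism type $\tau$ of a ball of radius $r$ to $\min\bigl(k,\#\{v:B_r(v)\cong\tau\}\bigr)$. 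Letting $\A_{\mathbf s}$ be the class of all graphs of maximum degree $\le d$ with Hanf type $\mathbf s$ --- finitely many classes, partitioning this family --- Hanf's theorem says that two graphs in a common class satisfy the same sentences of quantifier rank $\le R$, so in particular Duplicator wins the corresponding pebble game (Theorem~\ref{thm:Ehren}). Hence it suffices to prove that $\Pr(G_n\in\A_{\mathbf s})$ converges as $n\to\infty$ for every level $R$ and every $(r(R),k(R))$-Hanf type $\mathbf s$; indeed $\Pr(G_n\models\varphi)$ is then a finite sum of such probabilities.

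To encode the relevant information, observe first that the total-degree identity recalled in the introduction gives $\sum_{v}(d-\deg_{G_n}v)=m(m+1)$ \emph{exactly} when $d=2m$; hence the set $A_n$ of vertices of degree $<d$ always has $|A_n|\le m(m+1)$, and the \emph{active region} $U_n:=\{v:\mathrm{dist}_{G_n}(v,A_n)\le 2r\}$ has at most $m(m+1)(1+d+\dots+d^{2r})$ vertices, uniformly in $n$. (This uniform bound is exactly what distinguishes $d=2m$ from $d>2m$, where $|A_n|$ grows and an infinite chain is needed, cf.\ \cite{MalZhu22}.) Call $v$ \emph{$r$-stable at time $n$} if $B_r(v)$ is the same in $G_n$ and in every $G_{n'}$ with $n'\ge n$; as new edges touch only active vertices, this is equivalent to $B_r^{G_n}(v)\cap A_n=\emptyset$, so $r$-stability is monotone in $n$. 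Put
\[
X_n:=\Bigl(\,[U_n]_\cong\,,\ \bigl(\min(k,\ \#\{v\ r\text{-stable}:\ B_r^{G_n}(v)\cong\tau\})\bigr)_\tau\,\Bigr),
\]
where $[U_n]_\cong$ denotes the isomorphism type of the graph $U_n$ with every vertex marked by its degree. Then $X_n$ takes values in a finite set $\mathcal S$, and the $(r,k)$-Hanf type of $G_n$ is a deterministic function $f(X_n)$: for each $\tau$, the capped count of $\tau$ in $G_n$ is recovered from the capped $r$-stable count stored in $X_n$ together with the number of $\tau$-balls among the at most $|U_n|$ non-$r$-stable vertices --- each such ball being an induced subgraph of $U_n$, hence visible in $X_n$.

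The key structural point is that $(X_n)_{n\ge m+1}$ is a \emph{time-homogeneous Markov chain}. At step $n\to n+1$ the new vertex is joined to a uniformly random $m$-subset of $A_n$, a distribution depending on $X_n$ only through $|A_n|$; given that choice, the updated degrees, the vertices that newly become $r$-stable together with their (henceforth frozen) $r$-balls, and the new region $U_{n+1}$ are all determined by $X_n$. The only thing to verify is that all vertices involved in the update lie in $U_n\cup\{n+1\}$: if $v$ becomes $r$-stable at step $n+1$ then $\mathrm{dist}_{G_n}(v,A_n)\le r$, while the ball $B_r^{G_{n+1}}(v)$ is frozen and so omits the active vertex $n+1$, whence a shortest $G_{n+1}$-path from $v$ inside that ball uses no new edge and is a $G_n$-path, giving $B_r^{G_{n+1}}(v)\subseteq U_n$; a parallel inspection of shortest paths yields $U_{n+1}\subseteq U_n\cup\{n+1\}$ with marked isomorphism type computable from $X_n$ and the attachment. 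Granting this, $(X_n)$ is a finite-state time-homogeneous chain, so $\Pr(X_n=x)$ converges for every $x\in\mathcal S$ as soon as every recurrent communicating class reachable from the initial state $X_{m+1}$ is aperiodic; then $\Pr(G_n\in\A_{\mathbf s})=\Pr\bigl(X_n\in f^{-1}(\mathbf s)\bigr)$ converges, which proves the theorem.

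The main obstacle, and the technical heart of the argument, is precisely this aperiodicity --- equivalently, the existence of a limiting distribution for the chain. Since the histogram coordinates of $X_n$ are non-decreasing and capped, they are constant on any recurrent class, so it is enough to treat the chain governing $U_n$ alone. I would classify the possible recurrent configurations of the active region and exhibit, in each reachable recurrent class, a state admitting a positive-probability self-loop, namely a single attachment that reproduces the active region up to isomorphism. The guiding idea is that any atypical active vertex --- in particular an unusually old one --- is selected, and hence has its degree deficit decreased, with probability at least $1/(m+1)$ at every step (because $|A_n|\le m(m+1)$), so it is almost surely eventually saturated and flushed out of $U_n$; after enough such flushings the active region settles into a ``generic'' steady pattern from which one reads off a one-step return. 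Turning this picture into a rigorous description of the recurrent classes, and a verification of aperiodicity for each, is where the real work lies.
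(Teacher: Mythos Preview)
Your framework is sound and close in spirit to the paper's: both reduce the question to the limiting behaviour of a finite Markov chain carried by the bounded ``active region'' around the deficit vertices. The packaging differs --- you invoke Hanf locality and fold the capped ball-type histogram into a single chain $X_n$, whereas the paper runs a direct Ehrenfeucht--Fra\"{\i}ss\'{e} argument (Lemma~\ref{lem:game}) and treats the three ingredients separately: the chain $U^a_n(n)$ on the active neighbourhood (Lemma~\ref{lem:neighbourhood_type}), the abundance of each complete ball type among stable vertices (Lemma~\ref{lem:vertex_neighbourhoods}), and the eventual freezing of the neighbourhood $W^{2a}(n)$ of the initial $m{+}1$ vertices (Lemma~\ref{lem:first_vertices}). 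Your single-chain formulation is tidy, and your verification that the update $X_n\to X_{n+1}$ stays inside $U_n\cup\{n{+}1\}$ is correct.

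The genuine gap is exactly the one you flag: aperiodicity of the reachable recurrent classes. Here the paper does not hunt for a one-step self-loop, and that route may be harder than you expect --- the natural ``generic'' configurations of the active region do not return to themselves after a single attachment. Instead the paper exhibits one canonical state, the \emph{forest state} consisting of $m{+}1$ pairwise non-adjacent deficit vertices each of degree $m$, and proves constructively (Lemma~\ref{lem:tree_state}) that it is reachable from \emph{every} state in exactly $m{+}1$ steps: at each step attach the new vertex to the $m$ current vertices of smallest degree, and a total-degree count shows this saturates all old vertices after $m{+}1$ rounds. One may also make one arbitrary step and then apply the same $m{+}1$ greedy steps, so return times $m{+}1$ and $m{+}2$ both occur; this yields a single recurrent communicating class for the active-region chain together with aperiodicity in one stroke. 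That lemma (and its iterated version for the $a$-neighbourhood chain, obtained by repeating the construction $a{+}1$ times) is precisely the missing ingredient in your outline; once you plug it in, your argument goes through.
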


\section{Configuration of open vertices}

For a graph $G_n$, consider its subgraph $T_n$, obtained in the following way. First, consider the subgraph of $G_n$ on vertices that have degrees less than $d$, and for each vertex add to it the number of leaves equal to the difference between vertex degree in $G_n$ and such a subgraph.  

Note that the set of all possible subgraphs obtained in such a way is finite. Also, $T_{n+1}$ depends only on $T_n$ and does not depend on $n$. As result, $T_n,$ $n\in\N$ form a Markov chain (see, e.g., \cite[Chapter 6]{GS01} for more details on Markov chains and corresponding terminology) with finite number of states (which corresponds to different types of subgraphs $T_n$).

\begin{lemma}
\label{lem:tree_state}
A state that consists of $m+1$ isolated vertices with $m$ leaves is achievable from any state in $m+1$ steps (we would call such a state a forest state).
\end{lemma}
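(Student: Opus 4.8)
The plan is to show that from an arbitrary state one can reach the forest state along an explicit sequence of $m+1$ transitions of positive probability. Since at each step the new vertex is joined to a uniformly random $m$-subset of the currently open vertices (those of degree $<d$), any prescribed attachment to a fixed set of $m$ open vertices has positive probability; so it suffices to prescribe, for each of the $m+1$ steps, which $m$ open vertices the new vertex attaches to, and to check that this prescription is legal when it is applied.

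The first ingredient I would set up is a conservation law. Call the \emph{free capacity} of an open vertex $u$ the quantity $2m-\deg_{G_n}(u)$; it always lies between $1$ and $m$, since the seed $G_{m+1}$ and every inserted vertex have degree exactly $m$ and degrees never decrease. Summing $2m-\deg_{G_n}(u)$ over the open vertices and using the total-degree value $dn-m(m+1)$ and the degree-$d$ count already recorded in the excerpt (equivalently, by induction: each step removes $m$ units of capacity by drawing the $m$ new edges and adds one new open vertex of capacity $m$), the total free capacity of $G_n$ equals $m(m+1)$ for every $n$, and the number $o$ of open vertices satisfies $m+1\le o\le m(m+1)$. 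This is the numerical coincidence the whole argument rests on: over the next $m+1$ steps exactly $m(m+1)$ new edges are drawn, matching the total free capacity of the $o$ present open vertices. So I would aim for a prescription in which every new edge fills a free slot of one of the $o$ old open vertices — so all of them become closed — and in which no new vertex is ever joined to an earlier new vertex — so each of the $m+1$ new vertices keeps degree $m$ and ends with all its neighbours closed. The terminal state is then forced to be $m+1$ mutually non-adjacent vertices, each carrying $m$ leaves, i.e.\ the forest state.

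To build the prescription I would solve the resulting scheduling problem. Write $u_1,\dots,u_o$ for the old open vertices with free capacities $s_1,\dots,s_o$ (so $1\le s_i\le m$ and $\sum_i s_i=m(m+1)$), form the multiset in which $u_i$ has multiplicity $s_i$, list its $m(m+1)$ tokens so that the copies of each $u_i$ are consecutive, and for $j=1,\dots,m+1$ let $B_j$ be the set of tokens in positions congruent to $j$ modulo $m+1$. Each $B_j$ has size $m$ and contains no vertex twice, because the $s_i\le m<m+1$ copies of $u_i$ occupy at most $m$ consecutive positions, no two of which are congruent modulo $m+1$. Prescribing that the $j$-th new vertex attaches exactly to $B_j$: then $u_i$ is used in exactly $s_i$ of the rounds, and before its $t$-th use it has been used $t-1<s_i$ times, so it still has a free slot and is still open — hence every prescribed attachment is legal. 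After all $m+1$ rounds every $u_i$ is closed and every new vertex still has degree $m$ with all neighbours closed, so $T_{n+m+1}$ is the forest state.

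The main obstacle is precisely this scheduling step: one must be certain that all old open vertices can be closed using exactly the $m(m+1)$ new edges within $m+1$ rounds of $m$ distinct targets, without ever attaching to a new vertex or to an already closed vertex. The conservation law makes the counts match exactly, and the cyclic-block construction works because each free capacity is at most $m$, strictly below the number $m+1$ of rounds; the remaining checks — legality at each round and identification of the final configuration — are then routine.
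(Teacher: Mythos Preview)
Your proof is correct. Both your argument and the paper's rest on the same conservation identity --- the total free capacity of the open vertices is exactly $m(m+1)$, matching the $m(m+1)$ edges drawn over the next $m+1$ rounds --- and both exhibit a positive-probability sequence of attachments that sends every new edge to an old open vertex, so that all old open vertices close and the $m+1$ fresh vertices end up mutually non-adjacent at degree $m$. The difference is purely in the scheduling. The paper connects each new vertex to the $m$ old vertices of currently smallest degree and argues that the minimum degree climbs by one per round; you instead lay out the $m(m+1)$ free slots consecutively and read off the rounds cyclically modulo $m+1$. Your construction makes the two crucial checks --- that each round's $m$ targets are distinct and still open, and that every old open vertex is used exactly its capacity many times --- completely transparent, since each capacity $s_i\le m<m+1$ forces the $s_i$ consecutive tokens of $u_i$ into distinct residue classes. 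The paper's greedy rule reaches the same endpoint, but its inductive degree-tracking is stated more loosely; your version trades that induction for an explicit combinatorial decomposition.
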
 
\begin{proof}
Since the total degree of $G_n$ equals to $2mn-m(m+1)$, it would take $m+1$ steps of drawing $m$ edges to $G_n$ from a new vertices to make all vertices of $G_n$ to have degrees equal to $d=2m$.
Let us consider the following procedure. At each step, we would choose $m$ vertices among vertices of $G_{n}$ with the smallest degrees and draw edges to them. Since before $k$-th step the total degree of vertices of $[n]$ ($[n]$ denotes first $n$ vertices, i.e. all vertices of $G_n$) would be equal to $2mn-m(m+1-(k-1))$, there could not be more then $m$ vertices with degree at most $d-m+k-1$, which means if the minimum degree before $k$-th step does not exceed $d-m+k-1$, it would increase by $1$. Hence, the minimum degree after $k$-th step would be at least $d-m+k$, and there would be at least $m$ vertices with degrees less than $d$, so we could make the next step.
As result, we would get $m+1$ vertices of degree $m$ that are connected only to vertices of $[n]$, which have degrees equal to $d$.
\end{proof}
From this lemma, it follows that we could return from the forest state to itself in both $m+1$ and $m+2$ steps (we could make a single step and then apply this lemma to the resulting state). Therefore, all states achievable from the forest state form an irreducible persistent Markov chain, and, hence, there exists a limiting probability distribution.

Define $U^a_n(t)$ as $a$-neighborhood of vertices of $G_n$ with degrees less then $d$ (at moment $n$) at time $t\geq n$. There is a finite number of possible neighborhoods. Transitions between $U^a_n(n)$ and $U^a_{n+1}(n+1)$ depends only on state of $U^a_n(n)$, while transition between  $U^a_n(t)$ and $U^a_n(t+1)$ depends on $U^a_n(t)$ and $U^a_t(t)$.
Therefore, $U^a_n(n)$ form a finite Markov chain over $n$ and a pair $(U^a_n(t),U^a_t(t))$ form a finite Markov chain over $t$.

For the chain $U^a_n(n)$ define the following basic state. Consider $a+1$ consequent groups $A_1,...,A_{m+1}$ of $m$ vertices, such that every vertex of $A_i$ connected with and only with every vertices of $A_{i-1}$ and $A_{i+1}$. Note that this state is achievable from any state in $m(a+1)$ steps, by repeating procedure described in Lemma~\ref{lem:tree_state} $a+1$ times. Therefore, this chain is irreducible and all states achievable from the basic state (and only them) are persistent, and, hence, there exists limiting probability distribution. Therefore, we get the following result.
\begin{lemma}
\label{lem:neighbourhood_type}
Let a graph $H$ belong to a set $\mathcal{U}^{a}$ of all possible neighborhoods $U^a_n(n)$, achievable from the basic state.
Then 
$$\Pr\left(U^a_{n}(n)=H\right)\to c_H$$
almost surely, and
$$\sum_{H\in\mathcal{U}^a}c_H=1.$$
\end{lemma}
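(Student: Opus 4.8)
The plan is to read off the statement from the classical convergence theorem for finite Markov chains applied to the chain $U^a_n(n)$, whose transition structure has already been identified above. The first step is to pin down its recurrent behaviour. By the remark preceding the lemma (repeating the procedure of Lemma~\ref{lem:tree_state}), the basic state is reachable from \emph{every} state of the chain in $m(a+1)$ steps, whereas $\mathcal{U}^a$ is by definition the set of states reachable \emph{from} the basic state. Hence every state of $\mathcal{U}^a$ leads to the basic state and conversely, so $\mathcal{U}^a$ is a single communicating class; it is moreover closed, since any state reachable from a state of $\mathcal{U}^a$ is reachable from the basic state and therefore again lies in $\mathcal{U}^a$. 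Being a closed communicating class of a finite chain, $\mathcal{U}^a$ is a recurrent class, and it is the only one: the basic state is reachable from every state, hence (recurrent classes being closed) it belongs to every recurrent class. Consequently the stationary distribution $\pi$ supported on $\mathcal{U}^a$ is unique, with $\sum_{H\in\mathcal{U}^a}\pi(H)=1$.

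The second step is aperiodicity of the chain on $\mathcal{U}^a$, which is what upgrades "a limiting distribution exists" to genuine convergence of $\Pr(U^a_n(n)=H)$. It suffices to exhibit two coprime return times of the basic state to itself. One full round of the procedure of Lemma~\ref{lem:tree_state} returns the basic state to itself in $m(a+1)$ steps; performing one arbitrary step first and then applying the procedure to the resulting state returns it in $m(a+1)+1$ steps. As $\gcd(m(a+1),m(a+1)+1)=1$, the basic state is aperiodic, and hence so is the whole class $\mathcal{U}^a$.

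The final step is to invoke the fundamental theorem for finite Markov chains: for an irreducible aperiodic recurrent class, the $n$-step transition probabilities from any initial state converge, to $\pi(H)$ for $H$ in the class and to $0$ for states outside it. Since the process starts from the deterministic graph $G_{m+1}$, the state $U^a_{m+1}(m+1)$ is fixed, and by the irreducibility/absorption argument above the chain enters $\mathcal{U}^a$ with probability one; therefore $\Pr(U^a_n(n)=H)\to\pi(H)=:c_H$ for every $H\in\mathcal{U}^a$ and $\Pr(U^a_n(n)\notin\mathcal{U}^a)\to 0$, which yields $\sum_{H\in\mathcal{U}^a}c_H=1$.

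The only point needing real care is the aperiodicity step — or, more precisely, checking that the "one arbitrary step, then the Lemma procedure" argument is legitimate, i.e. that the procedure of Lemma~\ref{lem:tree_state} does reach the basic state from the state produced by that arbitrary step; but this is exactly the assertion that the basic state is achievable from \emph{any} state of the chain $U^a_n(n)$ in $m(a+1)$ steps, which was established when the basic state was introduced. Everything else is bookkeeping around the already-verified Markov structure of $U^a_n(n)$.
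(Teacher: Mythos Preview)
Your argument is correct and follows essentially the same route as the paper: the paragraph preceding the lemma already records that the basic state is reachable from every state in $m(a+1)$ steps, so the states reachable from it form a closed irreducible (hence persistent) class, and the limiting distribution exists. You simply make explicit what the paper leaves implicit for $U^a_n(n)$ --- namely the aperiodicity step via the ``one extra step, then apply the procedure'' trick, which the paper spells out only for the simpler chain $T_n$ --- and then invoke the standard convergence theorem; this is the same idea, just written out in more detail.
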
 

\section{Subgraphs on old vertices}

In this section, we study $a$-neighborhoods of vertices that contain only vertices of degree $d$ (in graphs $G_n$, for large enough $n$, once such a neighborhood is achieved on a vertex it does not change afterward). We would call such neighborhoods complete. Note that only a finite number of such neighborhoods could be achieved. Moreover, for each possible configuration of $U^a_n(n)$ the probabilities of obtaining any achievable set of complete neighborhoods on vertices of $T_n$ in bounded (by some constant $C$) number of steps depends only on graph $U^a_n(n)$ (and does not depend on $n$). Due to Lemma~\ref{lem:neighbourhood_type} the probability for  $U^a_n(n)$ to have a given configuration separated from $0$ for large enough $n$. Therefore for a vertex $n$ probabilities that its $a$-neighborhood at time $n+C$ would be complete and of a given achievable type is separated from $0$. Hence, we get the following result.
\begin{lemma}
\label{lem:vertex_neighbourhoods}
For any $k$ and any achievable type of complete $a$-neighborhood with a high probability, there are at least $k$ vertices with disjoint $a$-neighborhoods of that type in $G_n$.
\end{lemma}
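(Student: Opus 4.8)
The plan is to run a large number of consecutive time-windows, in each of which there is a \emph{uniformly positive} chance of freezing a fresh complete $a$-neighborhood of the prescribed type, and then to prune the collection down to a disjoint subfamily by a bounded-degree argument. Fix $k$ and an achievable complete type $\tau$. The first step is to record, from the discussion preceding the lemma together with Lemma~\ref{lem:tree_state} and Lemma~\ref{lem:neighbourhood_type}, the following quantitative fact: there are a constant $C\in\N$ and a $\delta>0$ such that, conditionally on the local state around the open vertices at any time $n$ (i.e.\ on $U^{a}_n(n)$ taken with a large enough radius), with probability at least $\delta$ some vertex acquires, within the next $C$ steps, a complete $a$-neighborhood of type $\tau$. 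One gets this by concatenating two moves: from \emph{any} state the basic/forest state is reached in $m(a+1)$ steps with probability bounded below by a positive constant (there are only finitely many states), and from the basic state a complete $\tau$-neighborhood is built in a further bounded number of steps with a fixed positive probability; then $C$ is the sum of the two step-counts and $\delta$ the product of the two probabilities. What matters is that this bound is \emph{conditional} and \emph{uniform over all reachable local states}, not merely an asymptotic unconditional probability.

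Next I would set $n_i:=N_0+iC$ for $i=1,\dots,L$, with $L=L(k,\varepsilon)$ to be fixed last, and let $E_i$ be the event that some vertex of $G_{n_{i+1}}$ (note $n_i+C=n_{i+1}$) has acquired a complete $a$-neighborhood of type $\tau$ during the window $(n_i,n_{i+1}]$. Since degrees never decrease and a complete neighborhood consists of vertices of degree $d$, on $E_i$ that neighborhood is frozen and therefore still present in $G_n$ for every $n\ge n_{i+1}$; moreover its center is distinct from the centers produced in other windows, since it is open (or not yet born) at time $n_i$ whereas a center produced in an earlier window $j<i$ is already closed by time $n_i\ge n_{j+1}$. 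Because $E_1,\dots,E_{i-1}$ are $\mathcal{F}_{n_i}$-measurable and, conditionally on $\mathcal{F}_{n_i}$, the first step gives $\Pr(E_i\mid\mathcal{F}_{n_i})\ge\delta$, the number $M$ of indices $i\le L$ with $E_i$ stochastically dominates a $\mathrm{Binomial}(L,\delta)$ variable.

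Then comes the passage from $M$ distinct complete $\tau$-neighborhoods to $k$ \emph{pairwise disjoint} ones. Let $s$ be a bound (finite, since degrees are at most $d$) on the number of vertices in any $a$-neighborhood in $G_n$, and write $N(v)$ for the vertex set of the $a$-neighborhood of $v$. A vertex $u$ lies in $N(v)$ iff $v$ lies in the $a$-neighborhood of $u$, so $u$ lies in at most $s$ of our neighborhoods; hence in the intersection graph on the $M$ centers (an edge whenever the two $a$-neighborhoods share a vertex) every vertex has degree less than $s^2$, so this graph has an independent set of size at least $M/s^2$, i.e.\ that many vertex-disjoint complete $\tau$-neighborhoods in $G_n$. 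Finally, given $\varepsilon>0$, choose $L$ so that $\Pr(\mathrm{Binomial}(L,\delta)\ge k s^2)\ge 1-\varepsilon$ (possible since $\delta>0$); then for all $n\ge N_0+(L+1)C$ we obtain $\Pr(G_n$ contains $k$ disjoint complete $a$-neighborhoods of type $\tau)\ge\Pr(M\ge k s^2)\ge 1-\varepsilon$, and letting $\varepsilon\to 0$ gives the claim.

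I expect the main obstacle to be the first step: making precise that the probability of creating a complete $\tau$-neighborhood in a bounded number of steps is bounded below \emph{conditionally on the current configuration and uniformly over all configurations reachable in the process}. This is exactly where the finiteness and irreducibility of the relevant chains (Lemma~\ref{lem:tree_state} and Lemma~\ref{lem:neighbourhood_type}) are used, and it is the reason for the two-stage ``reach the basic state, then build'' decomposition rather than a direct appeal to the asymptotic frequencies. The stochastic-domination bookkeeping and the bounded-degree intersection-graph argument for disjointness are then routine.
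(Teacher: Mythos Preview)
Your argument is correct and rests on the same core observation as the paper: a uniformly positive conditional probability (in a bounded time window, depending only on the finite local state around the open vertices) of freezing a complete $a$-neighborhood of the prescribed type. The paper's proof is essentially the paragraph preceding the lemma and stops at that observation, whereas you additionally spell out the Binomial domination over consecutive windows and the bounded-degree intersection-graph extraction of a disjoint subfamily; these are exactly the steps the paper leaves implicit, so your write-up is a more complete version of the same proof rather than a different route.
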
 

\section{Convergence law}

Fix $R\in\N$. Let $a=3^R$.
In this section we provide division on classes $\A_k$ of graphs and prove existence of the winning strategy in $R$ rounds  for a pair of graphs $G_{n_1}$, $G_{n_2}$ within the same class (for large enough $n_1,n_2$). We would consider a division based on the type of the graph $U^a_n(n)$ and the type of graph on initial vertices. The configuration on $m+1$ initial vertices is not achievable again during the process. Therefore, to insure that configuration of $U^a_n(n)$ is achievable, we need to make sure that the first $m+1$ vertices do not belong to $U^a_n(n)$. Since the number of vertices with a degree less than $d$ does not exceed $m(m+1)$, the probability to increase the degree of a given vertex is at least $\frac{1}{m+1}$. Hence, with high probability $a$-neighborhoods (we denote their union as $W^a(n)$) of the first $m+1$ vertices in $G_n$ contains only vertices of degree $d$ (by standard large deviation estimates such probability is at least $1-Ce^{-cn}$ for some constants $c, C>0$), so we get
\begin{lemma}
\label{lem:first_vertices}
For any $n_0$ with high probability all degrees of vertices from $[n_0]$ has degree $d$ in $G_n$. 
\end{lemma}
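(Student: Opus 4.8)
The plan is to prove Lemma~\ref{lem:first_vertices} via a straightforward large-deviation / Borel--Cantelli argument, using the key structural observation already highlighted in the excerpt: at every step of the process the number of vertices of degree strictly less than $d$ is at most $m(m+1)$, since the total degree of $G_n$ is exactly $dn-m(m+1)$ and each vertex contributes at most $d$. Consequently, whenever a vertex $v$ still has degree less than $d$ at time $n$, the $m$ endpoints of the new edges at step $n+1$ are chosen uniformly from a set of size at most $m(m+1)$ that contains $v$, so $v$ receives at least one new edge with probability at least $\frac{1}{m+1}$ (indeed, at least $\frac{m}{m(m+1)}$). Thus, conditionally on the past, each step is a Bernoulli trial with success probability bounded below by a constant that raises $\deg v$ by at least one, and $v$ reaches degree $d$ after at most $d$ such successes.

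First I would fix $v\in[n_0]$ and define the stopping time $\tau_v$ at which $\deg v$ first equals $d$. By the domination above, $\tau_v - (m+1)$ is stochastically dominated by a negative-binomial-type sum: the number of trials needed to collect $d$ successes when each trial succeeds with probability at least $\frac{1}{m+1}$, independently of everything else (one formalizes this by coupling with an i.i.d.\ Bernoulli sequence). Standard Chernoff bounds then give $\Pr(\tau_v > t) \le C e^{-c t}$ for constants $c,C>0$ depending only on $m$. Next I would take a union bound over the $n_0$ vertices of $[n_0]$: $\Pr(\exists v\in[n_0]: \deg v < d \text{ in } G_n) \le n_0 \cdot C e^{-c n}$, which tends to $0$ as $n\to\infty$ (for fixed $n_0$), establishing the ``with high probability'' claim. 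The same estimate, being summable in $n$, also yields the almost-sure version via Borel--Cantelli if that is needed downstream.

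The main obstacle, such as it is, is purely bookkeeping: one must set up the coupling carefully because the set of eligible vertices changes over time and $v$ may acquire two of the $m$ new edges in a single step, so $\deg v$ can jump by more than one; this only helps (it makes $\tau_v$ smaller), but the stochastic domination has to be phrased so as not to accidentally lose this. I would handle it by conditioning on the filtration generated by the process up to time $t$ and checking that, on the event $\{\deg v < d\}$, the probability of ``$\deg v$ increases by at least $1$ at step $t+1$'' is bounded below by $\frac{1}{m+1}$ uniformly; then the number of steps with no increase before $\tau_v$ is dominated by a geometric-sum random variable and Chernoff applies. Everything else — the union bound and the limit — is immediate, so the lemma follows.
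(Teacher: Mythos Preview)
Your argument is correct and is essentially the same as the paper's: the paper proves this lemma in the sentence immediately preceding it, observing that the number of vertices of degree less than $d$ is at most $m(m+1)$, so the probability of increasing a given vertex's degree is at least $\tfrac{1}{m+1}$, and then invoking ``standard large deviation estimates'' to get the $1-Ce^{-cn}$ bound. Your proposal simply spells out those large-deviation details (the coupling with Bernoulli trials, the Chernoff bound, and the union bound over $[n_0]$) that the paper leaves implicit.
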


Fix $\epsilon>0$. Let $N$ be such that with probability at least $1-\epsilon$, degrees of all vertices of $W^{2a}(N)$ in $G_N$ equal to $d$ (the same would be then true for all $n\geq N$).
There is a finite number $M$ of pairs of types of $W^{2a}(n)$ and $U^a_n(n)$. Let classes $\A_k,$ $k=1,\ldots,M,$ be defined by pairs $(W^{2a}(n),U^a_n(n))$, such that degrees of all vertices of $W^{2a}(N)$ in $G_N$ equal to $d$. Let us define the following properties of graphs $G_{n_1},G_{n_2}$.
\begin{itemize}
\item[${\sf Q1}$] $G_{n_1}$ and $G_{n_2}$ belong to the same class $\A_k$.
\item[${\sf Q2}$] For any achievable type of complete $a$-neighborhood of a vertex there are at least $R$ vertices with non-intersecting $a$-neighborhoods of that type in $G_n$ that does not intersect with $W^{2a}(n)$ and $U^a_n(n)$, $n=n_1,n_2$.
\end{itemize}
Note that the probability that for all $n>N$ graph $G_n$ belongs to one of the classes $\A_k$ is at least $1-\epsilon$.

\begin{lemma}
\label{lem:game}
If graphs $G_{n_1}$, $G_{n_2}$ satisfy properties $Q1,Q2$, then Duplicator has a winning strategy on them.
\end{lemma}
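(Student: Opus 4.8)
The plan is to play the Ehrenfeucht-Fraïssé $\gamma$-pebble game for $R$ rounds (with $\gamma$ the number of pebbles implicit in $R$; recall $a = 3^R$ is chosen so that $a$-neighborhoods shrink appropriately with each round) and describe an explicit Duplicator strategy. The guiding principle is the standard one for locality-based arguments: Duplicator maintains, after round $r$, that the partial map between pebbled vertices is a partial isomorphism and moreover that corresponding pebbled vertices have isomorphic $3^{R-r}$-neighborhoods, with these neighborhoods either disjoint or overlapping in the same pattern on both sides. Since $a = 3^R$ and the radius is divided by $3$ each round, there is enough ``room'' for this to be maintained.

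First I would set up the three regions that partition (up to the neighborhoods we track) each graph $G_{n_i}$: the initial-vertex region and its $2a$-neighborhood $W^{2a}(n_i)$, the open-vertex region $U^a_{n_i}(n_i)$, and the ``bulk'' consisting of old vertices all of whose relevant neighborhoods are complete. By $Q1$, the first two regions are isomorphic across the two graphs (same class $\A_k$), so Duplicator simply copies Spoiler's moves verbatim whenever Spoiler plays inside $W^{2a}(n_i) \cup U^a_{n_i}(n_i)$ or within distance $3^{R-r}$ of it — the isomorphism of the $2a$-neighborhood data, together with $a \geq$ the current radius, guarantees this copying stays consistent and is a partial isomorphism. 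The point of tracking $W^{2a}$ rather than $W^a$ is exactly to absorb the radius-$a$ neighborhoods that Spoiler can reach in later rounds from a vertex he places near the boundary.

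Next, when Spoiler plays a vertex $v$ in the bulk, far from the special regions and far from all previously pebbled vertices, Duplicator responds with a bulk vertex whose complete $a$-neighborhood has the same type as that of $v$ — such a vertex exists by $Q2$ (at least $R$ of each type, disjoint from the special regions and from each other), and since at most $R-1$ pebbles have been placed before the last round, there is always a fresh one available disjoint from everything pebbled so far. When Spoiler plays near a previously pebbled bulk vertex, Duplicator plays inside the already-matched isomorphic neighborhood on the other side. The remaining bookkeeping is the routine locality argument: check that after $R$ rounds the pebbled vertices induce isomorphic subgraphs because any edge among them is visible inside one of the matched neighborhoods (distances below $2 \cdot 3^0 +1$ are witnessed, non-edges likewise), which is where the geometric choice $a = 3^R$ is used.

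The main obstacle — and the step I would spend the most care on — is handling the \emph{interaction} between the special regions and the bulk: a vertex Spoiler places may lie at intermediate distance, close enough to $U^a_n(n)$ or $W^{2a}(n)$ to ``see'' part of it within its radius-$3^{R-r}$ neighborhood but not inside the guaranteed isomorphic window. I would handle this by arguing that after the first move the relevant radius drops to $3^{R-1}$, so the $2a = 2\cdot 3^R$ margin around the first-vertex region and the $a$-margin around $U^a_n(n)$ (both enforced by $Q1$ and by the disjointness clause of $Q2$) strictly contain every neighborhood Spoiler can subsequently probe from a ``boundary'' vertex; hence such a vertex is effectively either ``in'' the special region (copy verbatim) or ``in'' the bulk (use type-matching), with no genuinely mixed case surviving. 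Verifying that this dichotomy is exhaustive and that the radii bookkeeping closes up is the crux; everything else is the standard Hanf/EF locality template.
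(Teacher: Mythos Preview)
Your proposal is correct and follows essentially the same locality-based Ehrenfeucht--Fra\"{\i}ss\'{e} strategy as the paper: partition each graph into the initial-vertex region $W^{2a}$, the open-vertex region $U^a$, and the bulk; copy Spoiler's move via the $Q1$ isomorphism when he plays near a special region, use $Q2$ to supply a fresh vertex of matching complete neighborhood type when he plays far from everything, and use the already-matched neighborhood when he plays near a previous pebble. The only cosmetic difference is that the paper maintains balls of radius $2^{R-j+1}$ (halving each round) rather than your $3^{R-r}$, which is harmless since $2^{R} \le 3^{R} = a$ gives the needed containments either way.
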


\begin{proof}
Let us consider the following strategy. For a vertex $v$ and $r\in\N$ let $B_r(v)$ be its neighborhood of radius $r$.
Let Spoiler be putting pebbles $x_1,...,x_R$ and duplicator putting $y_1,...,y_R$. We omit a reference to a graph in the notation for these balls -- each time we use the notation, the host graph would be clear from the context. We need to make a strategy such that on each step subgraphs of $G_{n_1}$ and $G_{n_2}$ on pebbles are isomorphic. We will build the strategy by induction over $i$.
Let assume that $B_{2^{R-j+1}}(x_j)$ and $B_{2^{R-j+1}}(y_j)$ are the same (i.e. they isomorphic and keep correspondence between pebbles) for $j< i$. 

1. If $d(x_i,[m+1])<2^{R-i+1}$ then we put $y_i=x_i$. Note that their neighborhoods belong to $W^{a}(n)$ and, hence, are the same.

2. If $x_i$ belongs to one of $U^{2^{R-i+1}}_n(n)$, $n=n_1,n_2$, (without loss of generality assume it is $U^{2^{R-i+1}}_{n_1}(n_1)$), it's neighborhood belongs to $U^a_{n_1}(n_1)$ and since $U^a_{n_1}(n_1)$ and $U^a_{n_2}(n_2)$ are the same we could choose $y_i$ in $U^a_{n_2}(n_2)$ that corresponds to $x_i$ such that their neighborhood would be the same. 

3. If $x_i$ does not belong to either one of $W^{2^{R-i+1}}(n)$, $U^{2^{R-i+1}}_n(n)$, $n=n_1,n_2$, our goal is to choose $y_i$ in a way that its $2^{R-i+1}$ neighborhood would be exactly the same as of $x_i$. If there are pebbles in $B_{2^{R-i+1}}(x_i)$ (let $j$ be the lowest index of such a pebble), then $B_{2^{R-i+1}}(x_i)$ belongs to $B_{2^{R-j+1}}(x_j)$ (or $B_{2^{R-j+1}}(y_j)$), and, therefore, $x_i$ corresponds to the vertex in $B_{2^{R-j+1}}(y_j)$ (or $B_{2^{R-j+1}}(x_j)$), which we put as $y_i$. 
Now consider the case when there are no pebbles in $B_{2^{R-i+1}}(x_i)$.
If it does not belong to the neighborhood of one of the previous pebbles, there are at least $R-i+1$ vertices with exactly the same $a$-neighborhood (that does not interact with previously chosen vertices), and we choose any of them as $y_i$. If it belongs to the neighborhood of one of the previous pebbles, then it belongs along with its neighborhood to a wider $a$-neighborhood of one of the previous pebbles and hence corresponds to a vertex $y_i$ in the same $a$-neighborhood in the other graph.

\end{proof}

Now Theorem~\ref{thm:main} follows from
\begin{lemma}
\label{lem:properties}
For any $R\in\N$ and any $\epsilon>0$ there is $N\in\N$, and numbers $p_i>0$, $i\in[M]$, $\sum_{i=1}^{M}p_i=1-\epsilon$, such  that
\begin{itemize}
\item For any $\epsilon>0$ there is $N\in\N$, such that with probability at least $1-\epsilon$ for all $n_1>n_2>N$ the pair $(G^1,G^2)$ has the property ${\sf Q2}$;
\item for every $i\in[M]$, $\lim_{n\to\infty}\Pr(G_n\in\mathcal{A}_i)=p_i$.
\end{itemize}
\end{lemma}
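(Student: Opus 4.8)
The plan is to establish the two bullet points separately, using the Markov chain machinery built up in Sections~2 and~3. For the second bullet, I would argue that the pair $(W^{2a}(n), U^a_n(n))$ is a function of a finite Markov chain. Indeed, $U^a_n(n)$ is itself a finite Markov chain by Lemma~\ref{lem:neighbourhood_type}, and $W^{2a}(n)$, the union of the $2a$-neighborhoods of the first $m+1$ vertices, evolves deterministically once all these vertices have degree $d$ (which happens eventually with probability $1$ by Lemma~\ref{lem:first_vertices}). So on the event that $W^{2a}(N)$ already has all degrees equal to $d$, the pair is determined by $U^a_n(n)$, and Lemma~\ref{lem:neighbourhood_type} gives convergence of $\Pr(U^a_n(n) = H) \to c_H$ for each persistent state $H$. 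Summing $c_H$ over the states $H$ compatible with class $\A_i$ and intersecting with the $(1-\epsilon)$-probability event from the choice of $N$ yields numbers $p_i \geq 0$ with $\sum_i p_i = 1 - \epsilon$; discarding the (finitely many) classes with $p_i = 0$ and absorbing their mass into $\epsilon$ makes all $p_i > 0$. The slight subtlety here is that $W^{2a}(n)$ and $U^a_n(n)$ may a priori overlap or interact; but for $n \geq N$ the vertices of $W^{2a}(n)$ all have degree $d$ while $U^a_n(n)$ consists of neighborhoods of vertices of degree less than $d$, so they are disjoint, and the joint law is genuinely a deterministic function of $U^a_n(n)$ on this event.

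For the first bullet (property ${\sf Q2}$), I would invoke Lemma~\ref{lem:vertex_neighbourhoods}: for each of the finitely many achievable types of complete $a$-neighborhood, with high probability $G_n$ contains at least $R' := R + (m+1) + (\text{size bound for } U^a_n(n))$ disjoint $a$-neighborhoods of that type. Since $W^{2a}(n)$ and $U^a_n(n)$ are bounded-size subgraphs (their vertex counts are bounded by a constant depending only on $m$ and $R$, not on $n$), at most a bounded number of these $R'$ copies can intersect them, leaving at least $R$ copies disjoint from both $W^{2a}(n)$ and $U^a_n(n)$. Taking a union bound over the finitely many neighborhood types and over $n = n_1, n_2$, this holds simultaneously with probability at least $1 - \epsilon$ for all $n_1 > n_2 > N$ (after enlarging $N$), which is exactly ${\sf Q2}$.

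The main obstacle I anticipate is bookkeeping the interaction between the three regions $W^{2a}(n)$, $U^a_n(n)$, and the "generic" copies of complete neighborhoods — specifically, making sure that the convergence in Lemma~\ref{lem:vertex_neighbourhoods} can be taken uniformly enough to hold jointly for $n_1$ and $n_2$ and simultaneously with the constraint of avoiding the other two regions. This is handled by the observation that all the relevant subgraphs except $G_n$ itself have size bounded independently of $n$, so "avoid a bounded set" costs only a bounded additive loss in the count of available copies; one then just takes $R'$ large enough to absorb this loss. A second point requiring a little care is that the classes $\A_k$ were defined via the condition at the fixed time $N$, so I should remark that this condition is inherited by all $n \geq N$ (the degree of a vertex never decreases, and once $W^{2a}$ stabilizes it stays stabilized), ensuring the event $\{G_n \in \A_k \text{ for all } n > N\}$ is well-defined and has probability at least $1 - \epsilon$ as already noted in the text. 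Combining the two bullets with Lemma~\ref{lem:game} and Theorem~\ref{thm:Ehren} then gives Theorem~\ref{thm:main}.
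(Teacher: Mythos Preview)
Your approach matches the paper's: bullet~2 via Lemmas~\ref{lem:first_vertices} and~\ref{lem:neighbourhood_type}, bullet~1 via Lemma~\ref{lem:vertex_neighbourhoods}. There is, however, a genuine gap in your argument for bullet~1. You write that ``taking a union bound over the finitely many neighborhood types and over $n=n_1,n_2$, this holds simultaneously with probability at least $1-\epsilon$ for all $n_1>n_2>N$ (after enlarging $N$)''. A union bound over types and over the two values $n_1,n_2$ handles only a \emph{fixed} pair; it cannot give a statement valid for \emph{all} $n_1>n_2>N$, which is an infinite union. Lemma~\ref{lem:vertex_neighbourhoods} is a ``with high probability'' statement for a single $n$, and you have no summable error bounds to push a Borel--Cantelli argument through. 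The missing ingredient, which the paper states explicitly, is monotonicity: once a complete $a$-neighborhood is formed, all its vertices have degree $d$, so it never acquires new edges and hence the count of disjoint complete $a$-neighborhoods of each type in $G_n$ is non-decreasing in $n$. It therefore suffices to obtain $R'$ such copies in $G_N$ for a single $N$; they persist for all $n\geq N$, and your bounded-size avoidance argument for $W^{2a}(n)$ and $U^a_n(n)$ then applies at each $n$ separately.

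A minor remark on bullet~2: your phrasing ``the pair is determined by $U^a_n(n)$'' is not literally correct --- after stabilization $W^{2a}(n)$ is constant in $n$ but still random (a function of the early history). The clean statement is that the Markov chain $U^a_n(n)$ converges to its stationary distribution from any initial state, so conditioning on each of the finitely many possible values of $W^{2a}$ still yields convergence, and summing recovers the joint limit $p_i$. Your subsequent sentence about summing the $c_H$ compatible with $\A_i$ suggests you had this in mind, but the wording obscures it.
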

\begin{proof}
The first part follows from Lemma~\ref{lem:vertex_neighbourhoods} and the fact that the number of complete neighborhoods of the given type in $G_n$ is non-decreasing over $n$.

The second part follows from Lemma~\ref{lem:first_vertices} and Lemma~\ref{lem:neighbourhood_type}.
\end{proof}

\section*{Acknowledgements.}
The study was funded by RFBR, project number 19-31-60021. The author is grateful to Maksim Zhukovskii for helpful discussions.

\end{document}